\newtheorem{thm}{Theorem}[section]
\newtheorem{lem}[thm]{Lemma}
\theoremstyle{definition}
\numberwithin{equation}{section}
\begin{document}

\title{\Large On an inverse problem in additive number theory
}\author{\large Min Tang\thanks{Corresponding author. This work was supported by the National Natural Science Foundation of China(Grant
No. 11971033) and top talents project of Anhui Department of Education(Grant No. gxbjZD05).} and Hongwei Xu}
\date{} \maketitle
 \vskip -3cm
\begin{center}
\vskip -1cm { \small
\begin{center}
 School of Mathematics and Statistics, Anhui Normal
University
\end{center}
\begin{center}
Wuhu 241002, PR China
\end{center}}
\end{center}

  {\bf Abstract:} For a set $A$, let $P(A)$ be the set of all finite subset sums of $A$. In this paper, for a sequence of integers $B=\{1<b_1<b_2<\cdots\}$ and $3b_1+5\leq b_2\leq 6b_1+10$, we determine the critical value for $b_3$ such that there exists an infinite sequence $A$ of positive integers for which
 $P(A)=\mathbb{N}\setminus B$. This result shows that we partially solve the problem of Fang and Fang [`On an inverse problem in additive number theory', Acta Math. Hungar. 158(2019), 36-39].

{\bf Keywords:} subsetsum; completement; inverse problem

2020 Mathematics Subject Classification: 11B13\vskip8mm

\section{Introduction}
Let $\mathbb{N}$ be the set of all nonnegative integers. For a sequence of integers $A=\{a_1<a_2<\cdots\}$, let
$$P(A)=\left\{\sum \varepsilon_ia_i: a_i\in A, \varepsilon_i=0\text{ or }1, \sum \varepsilon_i<\infty\right\}.$$
Here $0\in P(A)$.
In 1970, Burr \cite{Burr} asked the following question: which subsets $S$ of $\mathbb{N}$ are equal to $P(A)$ for some $A$?
Burr showed the following result (unpublished):

\noindent{\bf Theorem A} (\cite{Burr}).
{\it Let $B=\{4\leq b_1<b_2<\cdots\}$ be a sequence of integers for which $b_{n+1}\geq b_n^2$ for $n=1,2,\ldots$. Then there exists $A=\{a_1<a_2<\cdots\}$ for which
 $P(A)=\mathbb{N}\setminus B$.}

 Burr \cite{Burr} ever mentioned that if $B$ grows sufficiently rapidly, then there exists a sequence $A$ such that  $P(A)=\mathbb{N}\setminus B$.
 In 1996, Hegyv\'{a}ri \cite{Hegy} showed some examples that if sequence $B$ grow slowly, then there is no sequence $A$ for which
 $P(A)=\mathbb{N}\setminus B$.

 \noindent{\bf Theorem B} (\cite{Hegy}, Theorem 2).
{\it Let $B=\{b_1<b_2<\cdots\}$ be a sequence of integers. Assume that for $n>n_0$, $b_{n+1}/b_n\leq \sqrt{2}$ holds and $B$ is a Sidon sequence, i.e
$b_i+b_j=b_k+b_t$ implies $i=k$; $j=t$ or $i=t$; $j=k$. Then there is no sequence $A$ for which
 $P(A)=\mathbb{N}\setminus B$.}

In 2012, Chen and Fang \cite{chen2012} extended Hegyv\'{a}ri's result by elementary but not easy argument. In 2013, Chen and Wu \cite{chen2013} further continued Burr's question.

\noindent{\bf Theorem C} (\cite{chen2013}, Theorem 1).
{\it If $B=\{b_1<b_2<\cdots\}$ is a sequence of integers with $b_1\in\{4,7,8\}\cup \{b: b\geq 11, b\in \mathbb{N}\}$, $b_2\geq 3b_1+5$, $b_3\geq 3b_2+3$ and $b_{n+1}> 3b_n-b_{n-2}$ for all $n\geq 3$, then there exists a sequence of positive integers $A=\{a_1<a_2<\cdots\}$ such that
 $P(A)=\mathbb{N}\setminus B$ and
 $$P(A_s)=[0,2b_s]\setminus \{b_1,\ldots, b_s, 2b_s-b_{s-1}, \ldots,2b_s-b_1\},$$
 where $A_s=A\cap [0, b_s-b_{s-1}]$ for all $s\geq 2$.}

In \cite{chen2013}, Chen and Wu proved that these lower bounds are optimal in a sense. Moreover, they posed the following problem:

  \noindent{\bf Problem 1} (\cite{chen2013}, Problem 1). {\it Let $B=\{b_1<b_2<\cdots\}$ be a sequence of positive integers. Let $d_1=10$, $d_2=3b_1+4$, $d_3=3b_2+2$ and $d_{n+1}=3b_n-b_{n-2}(n\geq 3)$. If $b_m=d_m$ for some $m\geq 3$ and $b_n>d_n$ for all $n\neq m$. Is it true there is no sequence of positive integers $A=\{a_1<a_2<\cdots\}$ with $P(A)=\mathbb{N}\setminus B$?}

Recently, we show that the answer to Problem 1 is negative for $m=3$.

\noindent{\bf Theorem D} (\cite{Tang}, Theorem 1.1). {\it Let $B=\{11\leq b_1<b_2<\cdots\}$ be a sequence of integers with $b_2=3b_1+5$, $b_3=3b_2+2$ and $b_{n+1}=3b_n+4b_{n-1}$ for all $n\geq 3$. Then there exists a sequence of positive integers $A=\{a_1<a_2<\cdots\}$ such that
 $P(A)=\mathbb{N}\setminus B$.}

With the further research of Burr's question, many interesting problems arise. In 2019, for $b_2=3b_1+5$, Fang and Fang \cite{Fang2019} determined the critical value for $b_3$ such that there exists an infinite sequence of positive integers $A$ for which
 $P(A)=\mathbb{N}\setminus B$.

 \noindent{\bf Theorem E} (\cite{Fang2019}, Theorem 1.1).
{\it If $A$ and $B=\{1<b_1<b_2<\cdots\}$ are two infinite sequences of positive integers with $b_2=3b_1+5$ such that
 $P(A)=\mathbb{N}\setminus B$, then $b_3\geq 4b_1+6$. Furthermore, there exist two infinite sequences of positive integers $A$ and $B=\{1<b_1<b_2<\cdots\}$
with $b_2=3b_1+5$ and $b_3=4b_1+6$ such that
 $P(A)=\mathbb{N}\setminus B$.}

 Moreover, Fang and Fang \cite{Fang2019} posed the following problem:

  \noindent{\bf Problem 2} (\cite{Fang2019}, Problem 1.2). {\it For any $b_2\geq 3b_1+5$, determine the critical value for $b_3$ depending only on $b_1$
  and $b_2$ such that there exists an infinite sequence $A$ of positive integers for which $P(A)=\mathbb{N}\setminus B$.}

 In this paper, we partially solve their problem:
 \begin{thm}\label{thm1} Let $A$ and $B=\{1<b_1<b_2<\cdots\}$ be two infinite sequences of positive integers such that $P(A)=\mathbb{N}\setminus B$. If $3b_1+5\leq b_2\leq 6b_1+10$, then $b_3\geq b_2+b_1+1$.
\end{thm}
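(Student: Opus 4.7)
The plan is to argue by contradiction: assume $b_3 \le b_2 + b_1$ and examine how elements of $A$ just above $b_2$ must combine with the small elements of $A$ to produce subset sums. Set $A' = A \cap [1, b_2-1]$ and $a^* = \min(A \setminus A')$; since $b_2, b_3 \in B$ are excluded from $P(A) \supseteq A$, one has $a^* \ge b_2 + 1$ and $a^* \ne b_3$. A routine opening step establishes $P(A_0) = [0, b_1-1]$ with $\sum A_0 = b_1 - 1$ for $A_0 = A \cap [1, b_1-1]$; the alternative $\sum A_0 \ge 2b_1$ is ruled out by subset-sum symmetry together with the hypothesis $b_2 \le 6b_1 + 10$.

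In \textbf{Case I}, when $a^* \le b_3 - 1$, the inclusion $[0, b_1 - 1] \subseteq P(A \setminus \{a^*\})$ yields $[a^*, a^* + b_1 - 1] \subseteq P(A)$; combining $a^* \ge b_2 + 1$ with $b_3 \le b_2 + b_1$ gives $a^* + b_1 - 1 \ge b_3$, hence $b_3 \in P(A)$, contradicting $b_3 \in B$.

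In \textbf{Case II}, when $a^* \ge b_3 + 1$, every subset sum of $A$ at most $b_3 - 1$ already comes from $A'$, so $P(A') \cap [0, b_3 - 1] = [0, b_3 - 1] \setminus \{b_1, b_2\}$ and $b_3 \notin P(A')$. Writing $A' = A_0 \cup A'_1$ with $A'_1 = A \cap [b_1+1, b_2-1]$ gives $P(A') = [0, b_1 - 1] + P(A'_1)$, and the non-memberships $b_j \notin P(A')$ for $j \in \{2,3\}$ translate to $P(A'_1) \cap [b_j - b_1 + 1, b_j] = \emptyset$. The assumption $b_3 \le b_2 + b_1$ is exactly what makes these two forbidden intervals meet or overlap, fusing them into the single interval $[b_2 - b_1 + 1, b_3]$. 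Yet covering $[b_2 + 1, b_3 - 1] \subseteq P(A')$ requires that for each such $n$ some element of $[n - b_1 + 1, n]$ lie in $P(A'_1)$, and the union of these target intervals, $[b_2 - b_1 + 2, b_3 - 1]$, sits inside the forbidden interval. This yields a contradiction whenever $b_3 \ge b_2 + 2$.

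The main obstacle I expect is the edge sub-case $b_3 = b_2 + 1$, in which $[b_2 + 1, b_3 - 1]$ is empty and the fusion argument gives no information. To handle it I would work inside $A'_1$ directly: covering $[b_1 + 1, b_2 - 1]$ forces both $b_1 + 1 \in A'_1$ and $b_2 - b_1 \in P(A'_1)$, but any representation of $b_2 - b_1$ as a single element, or as a subset disjoint from $\{b_1 + 1\}$, would combine with $\{b_1 + 1\}$ to give a subset of $A'_1$ summing to $b_3$, violating $b_3 \notin P(A'_1)$. This forces $b_2 - b_1 = (b_1 + 1) + \sum T''$ with $T'' \subseteq A'_1 \setminus \{b_1 + 1\}$ summing to $b_2 - 2b_1 - 1$; a further round of such exchange arguments pins down additional elements of $A'_1$, and the upper bound $b_2 \le 6b_1 + 10$ keeps $|T''|$ and the required depth of case analysis uniformly bounded. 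At the end some element of $[0, b_2 - 1] \setminus \{b_1\}$ must fail to lie in $[0, b_1 - 1] + P(A'_1)$, producing the final contradiction.
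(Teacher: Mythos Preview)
Your approach is genuinely different from the paper's. The paper works \emph{forward}: using Lemma~\ref{lem2} it computes $P(\{a_1,\dots,a_{k+j}\})$ for $j=3,4,5,\dots$ and uses the bound $b_2\le 6b_1+10$ to cap how many steps can occur before the second ``hole'' in the partial subset-sum set is forced to coincide with $b_2$; at that moment the next missing integer is $b_2+b_1+1$. You instead argue \emph{structurally}: decompose $P(A')=[0,b_1-1]+P(A'_1)$ and translate $b_2,b_3\notin P(A')$ into a forbidden window $[b_2-b_1+1,b_3]$ for $P(A'_1)$. Your Case~I and the main part of Case~II are correct and clean; in fact neither uses the hypothesis $b_2\le 6b_1+10$, so you actually prove that $b_3\notin[b_2+2,\,b_2+b_1]$ for \emph{any} $b_2>b_1>1$, which is sharper than what the paper states. (Your remark in the opening about ruling out ``the alternative $\sum A_0\ge 2b_1$'' via $b_2\le 6b_1+10$ is unnecessary and a bit confused: Lemma~\ref{lem1} already gives $P(A_0)=[0,b_1-1]$ with no hypothesis on $b_2$.)

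The genuine gap is the edge sub-case $b_3=b_2+1$, precisely where you flag it. Here your two forbidden windows fuse into $[b_2-b_1+1,b_2+1]$ and nothing between them needs covering, so the contradiction must come from elsewhere. Your sketch (``a further round of such exchange arguments \dots\ keeps the depth of case analysis uniformly bounded'') is not a proof: after you force $b_2-2b_1-1\in P(A'_1\setminus\{b_1+1\})$, this target can be as large as $4b_1+9$ when $b_2=6b_1+10$, so $T''$ may have two or three elements, each constrained so that all pairwise and triple sums avoid the forbidden window; tracking these constraints is comparable in length to the paper's Case~1/Case~2 split, and you have not shown how it closes. One observation that shortens the work: since $\sum A'_1\in P(A'_1)$ lies outside the forbidden window, either $\sum A'_1=b_2-b_1$, in which case the subset-sum symmetry $s\mapsto(\sum A'_1)-s$ transports the gap $[1,b_1]$ to a second forbidden window $[b_2-2b_1,\,b_2-b_1-1]$ and then $b_2-b_1-1\notin[0,b_1-1]+P(A'_1)$ gives the contradiction immediately; or $\sum A'_1\ge b_2+2$, and it is this residual possibility that still requires a real case analysis invoking $b_2\le 6b_1+10$. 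As written, the edge case is not established.
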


 \begin{thm}\label{thm2} There exist two infinite sequences of positive integers $A$ and $B=\{b_1<b_2<\cdots\}$ with $b_1\in\{4,7,8\}\cup \{b: b\geq 11, b\in \mathbb{N}\}$, $3b_1+5\leq b_2\leq 6b_1+10$ and $b_3=b_2+b_1+1$ such that $P(A)=\mathbb{N}\setminus B$.
\end{thm}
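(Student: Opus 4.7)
The plan is to construct $A$ and $B$ explicitly, generalizing Fang and Fang's construction (Theorem~E) for the special case $b_2 = 3b_1+5$. Fix $b_1$ in the admissible set and $b_2 \in [3b_1+5,\,6b_1+10]$, and set $b_3 = b_2 + b_1 + 1$. Choose $b_4, b_5,\ldots$ satisfying $b_4 \geq 3b_3+3$ and $b_{n+1} > 3b_n - b_{n-2}$ for $n \geq 4$, so that a shifted form of the Chen--Wu growth conditions in Theorem~C holds from index $3$ onward. The heart of the proof is to produce a finite set $A^{\dagger} \subset \mathbb{Z}^{+}$ with $\sum_{a \in A^{\dagger}} a = 2b_3$ and
\[
P(A^{\dagger}) = [0, 2b_3] \setminus \{b_1,\, b_2,\, b_3,\, 2b_3 - b_2,\, 2b_3 - b_1\},
\]
which matches the ``$s = 3$''-level structure of Theorem~C. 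Once $A^{\dagger}$ is in hand, Chen and Wu's inductive procedure starting from $s = 3$ takes over and extends $A^{\dagger}$ to a full sequence $A$ with $P(A) = \mathbb{N} \setminus B$.

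For $A^{\dagger}$ I first pick a complete prefix $A^{(0)} \subseteq [1,\, b_1-1]$ with $\sum A^{(0)} = b_1-1$ and $P(A^{(0)}) = [0,\, b_1-1]$; admissibility of $b_1$ guarantees such a block exists (take $\{1,2\}$, $\{1,2,3\}$, $\{1,2,4\}$ for $b_1 = 4, 7, 8$, and $\{1, 2, \ldots, k-1,\, b_1 - 1 - k(k-1)/2\}$ for suitable $k$ when $b_1 \geq 11$). I then adjoin a small ordered set $T = \{b_1 + \alpha_1, \ldots, b_1 + \alpha_m\}$ (with $m \in \{3, 4, 5\}$), chosen so that $\sum T = b_2 + 1$, and finally the single element $b_3 + 1$; the total sum is $(b_1-1) + (b_2+1) + (b_3+1) = 2b_3$ as required. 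Iterating the identity $P(S \cup \{c\}) = P(S) \cup (P(S) + c)$ and tracking intervals against holes shows that after adjoining the elements of $T$ the partial subset-sum set becomes exactly $[0,\, b_1 + b_2] \setminus \{b_1, b_2\}$, and then adjoining $b_3+1$ symmetrizes this to produce the three new holes $\{b_3,\, 2b_3 - b_2,\, 2b_3 - b_1\}$, yielding the required five-hole configuration.

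The main obstacle is choosing the $\alpha_i$'s so that the intermediate hole lands precisely at $b_2$. Writing $b_2 = 3b_1 + 5 + t$ with $0 \leq t \leq 3b_1 + 5$, the clean range $t \in [0,\, b_1 - 1]$ is handled by $(\alpha_1, \alpha_2, \alpha_3) = (1, 2, 3+t)$; larger $t$ forces four or five terms in $T$, with each $\alpha_i$ equal to $i$ except the last, which absorbs the remaining slack, so that $b_2$ sweeps out $[4b_1+9,\, 5b_1+8]$ and $[5b_1+14,\, 6b_1+10]$ respectively. A handful of transitional values of $b_2$ (for instance $b_2 \in [4b_1+5,\, 4b_1+8]$ and $b_2 \in [5b_1+9,\, 5b_1+13]$, where two natural partial holes would coincide and generate a spurious third hole) require local adjustments of $T$, such as replacing $(1, 2, \beta)$ by $(1, 3, \beta - 1)$ or by inserting an additional element of the form $b_1 + \alpha$. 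Each such subcase reduces to a direct union-of-arithmetic-intervals calculation that verifies the hole pattern. Once $A^{\dagger}$ is exhibited in every case, the Chen--Wu induction from $s = 3$ onward completes the construction.
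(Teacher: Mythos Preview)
Your construction of the finite prefix is essentially the same as the paper's: both build $A^{(0)}\subseteq[1,b_1-1]$ with $P(A^{(0)})=[0,b_1-1]$ and then adjoin three or four elements (your block $T$, the paper's $a_{k+1},\dots,a_{k+4}$) so that the partial subset-sum set becomes exactly $[0,b_1+b_2]\setminus\{b_1,b_2\}$; the case split on the size of $b_2$ and the small ``transitional'' adjustments you describe match the paper's case analysis in Section~4 closely.

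The approaches diverge after this point. You append the single element $b_3+1$ to obtain the symmetric five-hole pattern $P(A^{\dagger})=[0,2b_3]\setminus\{b_1,b_2,b_3,2b_3-b_2,2b_3-b_1\}$ and then invoke the Chen--Wu inductive mechanism from level $s=3$. The paper instead stops at the two-hole stage and simply takes $a_{i+1}\ge a_i^{2}$ thereafter (Burr-style growth); then $a_{\text{next}}>b_1+b_2+1$, so $b_3=b_1+b_2+1$ is forced automatically, and $B=\mathbb{N}\setminus P(A)$ is infinite because $a_{i+1}-1\notin P(A)$ for all large $i$. This is shorter and avoids any appeal to \cite{chen2013}. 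Your route, by contrast, gives finer control over the tail of $B$ (you can prescribe $b_4,b_5,\dots$ subject to the Chen--Wu growth conditions), which is more than Theorem~\ref{thm2} asks for.

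One point to tighten: your invocation of Chen--Wu is not a citation of Theorem~C itself, since here $b_3=b_1+b_2+1<3b_2+3$ and your $A^{\dagger}$ is not contained in $[0,b_3-b_2]=[0,b_1+1]$ (its largest element is $b_3+1$). You are really appealing to the inductive \emph{step} in their proof, and you should check explicitly that it launches from your base: in particular that the first new elements added at level $s=3$ exceed $\max A^{\dagger}=b_3+1$. Your hypothesis $b_4\ge 3b_3+3$ does give $b_4-b_3\ge 2b_3+3>b_3+1$, so this goes through, but it deserves a line of justification rather than a bare reference.
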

\section{Lemmas}

\begin{lem}\label{lem1}(\cite{chen2012}, Lemma 1). Let $A=\{a_1<a_2<\cdots\}$ and $B=\{b_1<b_2<\cdots\}$ be two sequences of positive integers with $b_1>1$ such that $P(A)=\mathbb{N}\setminus B$.
Let $a_k<b_1<a_{k+1}$. Then $$P(\{a_1,\ldots,a_i\})=[0,c_i], \quad i=1,2,\ldots,k,$$
where $c_1=1$, $c_2=3$, $c_{i+1}=c_i+a_{i+1}(1\leq i\leq k-1)$, $c_k=b_1-1$ and $c_i+1\geq a_{i+1} (1\leq i\leq k-1)$.
\end{lem}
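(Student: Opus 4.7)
My plan is to prove Lemma~\ref{lem1} by induction on $i$, leveraging the fact that any ``gap'' in the initial subset sums of $A$ below the value $b_1$ would be forced into $B$ and thereby contradict $b_1=\min B$.

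For the base case ($i=1$), since $1<b_1$ we have $1\in\mathbb{N}\setminus B=P(A)$; as no subset of distinct positive integers sums to $1$ other than the singleton $\{1\}$ itself, necessarily $a_1=1$, and hence $P(\{a_1\})=\{0,1\}=[0,c_1]$.

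For the inductive step, assume $P(\{a_1,\ldots,a_i\})=[0,c_i]$ for some $1\leq i\leq k-1$. The key substep, and the only real obstacle, is to establish $a_{i+1}\leq c_i+1$. Suppose for contradiction $a_{i+1}\geq c_i+2$. Then $c_i+1<a_{i+1}\leq a_k<b_1$, so $c_i+1\notin B$ and consequently $c_i+1\in P(A)$. Any representation of $c_i+1$ as a finite subset sum of $A$ may use only summands that are $<b_1$, so its summands all lie in $\{a_1,\ldots,a_k\}$; moreover none of $a_{i+1},\ldots,a_k$ can appear, since even $a_{i+1}>c_i+1$. Hence $c_i+1\in P(\{a_1,\ldots,a_i\})=[0,c_i]$, a contradiction. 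With $a_{i+1}\leq c_i+1$ in hand, translation yields $P(\{a_1,\ldots,a_{i+1}\})=[0,c_i]\cup[a_{i+1},c_i+a_{i+1}]=[0,c_{i+1}]$, closing the induction. Applied at $i=1$, the same bound forces $1<a_2\leq 2$, so $a_2=2$ and $c_2=3$.

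Finally, to identify $c_k=b_1-1$: since $[0,c_k]=P(\{a_1,\ldots,a_k\})\subseteq P(A)=\mathbb{N}\setminus B$ and $b_1\in B$, we get $c_k<b_1$; conversely $b_1-1\notin B$ puts $b_1-1\in P(A)$, and by the same restriction of summands to those $<b_1$ we conclude $b_1-1\in P(\{a_1,\ldots,a_k\})=[0,c_k]$, hence $c_k\geq b_1-1$. Combining the two inequalities gives $c_k=b_1-1$, which completes the proof. The entire argument is essentially bookkeeping of which indices $a_j$ may contribute to a sum bounded by $b_1$; no deeper idea appears to be needed.
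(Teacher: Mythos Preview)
The paper does not supply its own proof of Lemma~\ref{lem1}; the result is simply quoted from \cite{chen2012}. Your induction is correct and is exactly the standard argument for this fact: the only substantive step is the bound $a_{i+1}\le c_i+1$, which you obtain by noting that otherwise $c_i+1$ would be a positive integer below $b_1$ that cannot be written as a subset sum of $A$, contradicting $b_1=\min B$. The remaining claims ($c_2=3$ and $c_k=b_1-1$) fall out as you describe.
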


\begin{lem}\label{lem2} Let $A=\{a_1<a_2<\cdots\}$ and $B=\{1<b_1<b_2<\cdots\}$ be two sequences of positive integers such that $P(A)=\mathbb{N}\setminus B$. Write $P(\{a_1,\ldots,a_k\})=[0,b_1-1]$.
If $b_2\geq 3b_1+5$, then we have

\noindent(i) $a_{k+1}=b_1+1$, $a_{k+2}\leq 2b_1+1$, $a_{k+3}\leq a_{k+2}+b_1$, $b_2\geq a_{k+3}+a_{k+2}+b_1$;

\noindent(ii) $P(\{a_1,\ldots,a_{k+3}\})=[0, a_{k+3}+a_{k+2}+2b_1] \setminus \{b_1, a_{k+3}+a_{k+2}+b_1\}.$
\end{lem}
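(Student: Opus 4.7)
The strategy is inductive: compute $P(\{a_1,\ldots,a_{k+j}\})$ explicitly for $j=1,2,3$, and at each stage invoke the following ``gap principle'': if some integer $m<b_2$ is missing from the current partial sumset, then $m$ must lie in $P(A)$, so its representation uses an element $a_i$ with $i$ past the current index, thereby bounding the next element of $A$. Throughout, the hypothesis $b_2\geq 3b_1+5$ is what supplies the room for the gap principle to fire.

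First I would argue $a_{k+1}=b_1+1$: we have $a_{k+1}\geq b_1+1$ because $b_1\notin P(A)$, and $a_{k+1}>b_1+1$ would make $b_1+1$ missing from $P(A)$, hence in $B$, forcing $b_2\leq b_1+1$ against $b_2\geq 3b_1+5$. A direct computation then gives $P(\{a_1,\ldots,a_{k+1}\})=[0,2b_1]\setminus\{b_1\}$. Since $2b_1+1<b_2$, the gap principle yields $a_{k+2}\leq 2b_1+1$, and a short set-theoretic argument gives $P(\{a_1,\ldots,a_{k+2}\})=[0,a_{k+2}+2b_1]\setminus\{b_1,\,a_{k+2}+b_1\}$, where one uses $a_{k+2}>b_1+1$ to keep $b_1$ missing and $a_{k+2}>b_1$ to keep $a_{k+2}+b_1$ missing. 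Because $a_{k+2}+b_1\leq 3b_1+1<b_2$, the gap principle then yields $a_{k+3}\leq a_{k+2}+b_1$.

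For (ii) I would write
\[
P(\{a_1,\ldots,a_{k+3}\}) = P(\{a_1,\ldots,a_{k+2}\}) \,\cup\, \bigl(a_{k+3}+P(\{a_1,\ldots,a_{k+2}\})\bigr).
\]
Since $a_{k+3}\leq a_{k+2}+b_1$, the two intervals overlap and the union covers $[0,\,a_{k+3}+a_{k+2}+2b_1]$. The four a priori missing points are $b_1$, $a_{k+2}+b_1$, $a_{k+3}+b_1$, and $a_{k+3}+a_{k+2}+b_1$; checking each against both summands shows that $a_{k+2}+b_1$ is recovered by the shift (since $a_{k+2}+b_1-a_{k+3}$ lies in the lower block), $a_{k+3}+b_1$ already lies in the lower block (as $a_{k+3}+b_1\leq a_{k+2}+2b_1$), and only $b_1$ and $a_{k+3}+a_{k+2}+b_1$ remain missing. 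This proves (ii). The last inequality in (i) is then immediate: (ii) gives $[b_1+1,\,a_{k+3}+a_{k+2}+b_1-1]\subseteq P(A)$, so this interval is disjoint from $B$, and $b_2>b_1$ forces $b_2\geq a_{k+3}+a_{k+2}+b_1$.

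The one delicate point is the case analysis for (ii): each of the four candidate missing values must be tested against both summands of the union, and the required noncollisions depend on the strict inequalities $a_{k+2}>b_1+1$ and $a_{k+3}>a_{k+2}$. The argument is elementary throughout, but careful bookkeeping of which values land in which block is essential.
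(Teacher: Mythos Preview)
Your proof is correct and follows essentially the same approach as the paper's: build up $P(\{a_1,\ldots,a_{k+j}\})$ for $j=1,2,3$, at each stage using the fact that the next missing value is below $b_2$ to bound $a_{k+j+1}$, and then combine the unshifted and shifted copies to identify the surviving holes. The paper compresses your four-point check for (ii) into the single chain $a_{k+3}\leq a_{k+2}+b_1<a_{k+3}+b_1\leq a_{k+2}+2b_1$, but this encodes exactly the same verifications you spell out.
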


\begin{proof} Since $b_1>1$, we have $a_1=1$. Let $a_k<b_1<a_{k+1}$. By Lemma \ref{lem1}, we have $$P(\{a_1,\ldots,a_k\})=[0, b_1-1].$$
Since $b_1+1\in P(A)$ and $b_1\not\in P(A)$, we have $a_{k+1}=b_1+1$. Hence $$P(\{a_1,\ldots,a_{k+1}\})=[0, 2b_1]\setminus \{b_1\},$$
and $$a_{k+2}+P(\{a_1,\ldots,a_{k+1}\})=[a_{k+2}, a_{k+2}+2b_1]\setminus \{a_{k+2}+b_1\}.$$
If $a_{k+2}\geq 2b_1+2$, then $2b_1+1\not\in P(A)$ and $b_2=2b_1+1$, a contradiction. So $a_{k+2}\leq 2b_1+1$ and \begin{eqnarray}{\label{eq2.1}}P(\{a_1,\ldots,a_{k+2}\})=[0, a_{k+2}+2b_1] \setminus \{b_1, a_{k+2}+b_1\}.\end{eqnarray}
Then \begin{eqnarray}{\label{eq2.2}}a_{k+3}+P(\{a_1,\ldots,a_{k+2}\})&=&[a_{k+3}, a_{k+3}+a_{k+2}+2b_1]\\
&\setminus &\{a_{k+3}+b_1, a_{k+3}+a_{k+2}+b_1\}.\nonumber\end{eqnarray}
If $a_{k+3}>a_{k+2}+b_1$, then $b_2=a_{k+2}+b_1\leq 3b_1+1$, a contradiction. So $a_{k+3}\leq a_{k+2}+b_1$. Since $$a_{k+3}\leq a_{k+2}+b_1<a_{k+3}+b_1\leq a_{k+2}+2b_1,$$
by (\ref{eq2.1}) and (\ref{eq2.2}) we have
$$P(\{a_1,\ldots,a_{k+3}\})=[0, a_{k+3}+a_{k+2}+2b_1] \setminus \{b_1, a_{k+3}+a_{k+2}+b_1\}.$$
Thus
$$b_2\geq a_{k+3}+a_{k+2}+b_1.$$

This completes the proof of Lemma \ref{lem2}.
\end{proof}

\section{Proof of Theorem \ref{thm1}}
Write $b_2=3b_1+5+m$. Then $0\leq m\leq 3b_1+5$.
By Lemma \ref{lem2}(i), we have
$$3b_1+5+m=b_2\geq a_{k+3}+a_{k+2}+b_1\geq 2a_{k+2}+b_1+1,$$
thus $a_{k+2}\leq b_1+2+\lfloor\frac{m}{2}\rfloor$. Moreover, by Lemma \ref{lem2}(i), we have
 $a_{k+2}\leq 2b_1+1$. Thus
 $$a_{k+2}\leq b_1+2+\min\left\{\left\lfloor\frac{m}{2}\right\rfloor, b_1-1\right\}.$$
By Lemma \ref{lem2}(i) we know that $a_{k+1}=b_1+1$, thus we can write
\begin{equation}\label{eq3.0}a_{k+2}=b_1+2+j, \quad 0\leq j\leq \min\left\{\left\lfloor\frac{m}{2}\right\rfloor, b_1-1\right\}. \end{equation} Again by Lemma \ref{lem2}(i), we have
$$3b_1+5+m=b_2\geq a_{k+3}+a_{k+2}+b_1= a_{k+3}+2b_1+2+j,$$
 thus $$a_{k+3}\leq b_1+3+m-j.$$ Moreover, by Lemma \ref{lem2}(i), we have
 $$a_{k+3}\leq a_{k+2}+b_1=b_1+3+j+b_1-1.$$
For fixed $0\leq j\leq \min\left\{\left\lfloor\frac{m}{2}\right\rfloor, b_1-1\right\}$, let
\begin{equation}\label{eq3.1}a_{k+3}=b_1+3+j+l, \quad 0\leq l\leq \min\{m-2j,b_1-1\}.\end{equation}

By (\ref{eq3.0}), (\ref{eq3.1}) and Lemma \ref{lem2}(ii), we have
\begin{equation}{\label{eq1}}P(\{a_1,\ldots,a_{k+3}\})=[0, 4b_1+5+2j+l] \setminus \{b_1, 3b_1+5+2j+l\}.\end{equation}

If $l+2j=m$, then by (\ref{eq1}) we have
 $$P(\{a_1,\ldots,a_{k+3}\})=[0, 4b_1+5+m] \setminus \{b_1, b_2\}.$$
So, $b_3\geq 4b_1+6+m=b_2+b_1+1$.

If $0\leq 2j+l\leq m-1$, then \begin{equation}\label{eq3.2}3b_1+5+2j+l\leq 3b_1+5+m-1<b_2,\end{equation}
 \begin{eqnarray}{\label{eq2}}a_{k+4}+P(\{a_1,\ldots,a_{k+3}\})&=&[a_{k+4}, a_{k+4}+4b_1+5+2j+l]\\
&&\setminus \{a_{k+4}+b_1, a_{k+4}+3b_1+5+2j+l\}.\nonumber\end{eqnarray}
If $a_{k+4}>3b_1+5+2j+l$, then $b_2=3b_1+5+2j+l$, which contradicts with (\ref{eq3.2}). So $$a_{k+4}\leq 3b_1+5+2j+l.$$

{\bf Case 1:} $a_{k+4}=2b_1+5+2j+l$. Then $a_{k+4}+b_1=3b_1+5+2j+l$, thus
\begin{eqnarray}\nonumber P(\{a_1,\ldots,a_{k+4}\})&=&[0, 6b_1+10+4j+2l]\\
&& \setminus \{b_1, 3b_1+5+2j+l, 5b_1+10+4j+2l\}.\nonumber\end{eqnarray}
Hence \begin{eqnarray}\nonumber a_{k+5}&+&P(\{a_1,\ldots,a_{k+4}\})
=[a_{k+5},a_{k+5}+6b_1+10+4j+2l]\\&&\setminus \{a_{k+5}+b_1,a_{k+5}+3b_1+5+2j+l,a_{k+5}+5b_1+10+4j+2l\}.\nonumber\end{eqnarray}
If $a_{k+5}>3b_1+5+2j+l$, then $b_2=3b_1+5+2j+l,$ which contradicts with (\ref{eq3.2}).
So $$a_{k+5}\leq 3b_1+5+2j+l.$$ Moreover, $a_{k+5}\geq a_{k+4}+1=2b_1+6+2j+l$.
Since $$3b_1+6+2j+l\leq a_{k+5}+b_1\leq 4b_1+5+2j+l,$$ $$5b_1+11+4j+2l\leq a_{k+5}+3b_1+5+2j+l\leq 6b_1+10+4j+2l,$$
we have
$$ P(\{a_1,\ldots,a_{k+5}\})=[0, a_{k+5}+6b_1+10+4j+2l] \setminus \{b_1,a_{k+5}+5b_1+10+4j+2l\}.$$
Hence $$b_2\geq a_{k+5}+5b_1+10+4j+2l>6b_1+10,$$ a contradiction.

{\bf Case 2:}  $a_{k+4}\neq 2b_1+5+2j+l$.
Then by (\ref{eq1}) and (\ref{eq2}), we have
\begin{eqnarray}\label{eq3.60}P(\{a_1,\ldots,a_{k+4}\})&=&[0, a_{k+4}+4b_1+5+2j+l]\nonumber \\
&\setminus &\{b_1, a_{k+4}+3b_1+5+2j+l\}.\end{eqnarray}
Thus $$3b_1+5+m=b_2\geq a_{k+4}+3b_1+5+2j+l.$$
Hence $a_{k+4}\leq m-2j-l$.

If $a_{k+4}=m-2j-l$, then
 $$P(\{a_1,\ldots,a_{k+4}\})=[0, 4b_1+5+m] \setminus \{b_1, b_2\}.$$
So, $b_3\geq 4b_1+6+m=b_2+b_1+1$.

If $a_{k+4}<m-2j-l$, then
\begin{equation}\label{eq3.7}a_{k+4}+3b_1+5+2j+l<3b_1+5+m=b_2.\end{equation}
By (\ref{eq3.60}) we have \begin{eqnarray}\label{eq3.8}a_{k+5}&+&P(\{a_1,\ldots,a_{k+4}\})=[a_{k+5}, a_{k+5}+a_{k+4}+4b_1+5+2j+l]\nonumber \\
&\backslash &\{a_{k+5}+b_1, a_{k+5}+a_{k+4}+3b_1+5+2j+l\}.\end{eqnarray}
If $a_{k+5}>a_{k+4}+3b_1+5+2j+l$, then by (\ref{eq3.60}) and (\ref{eq3.8}), we have $$b_2=a_{k+4}+3b_1+5+2j+l,$$which contradicts with (\ref{eq3.7}).
Thus
 $$a_{k+5}\leq a_{k+4}+3b_1+5+2j+l.$$

{\bf Subcase 2.1:} $a_{k+5}+b_1\neq a_{k+4}+3b_1+5+2j+l$, then by (\ref{eq3.60}) and (\ref{eq3.8}), we have
\begin{eqnarray}\label{eq3.9}P(\{a_1,\ldots,a_{k+5}\})&=&[0, a_{k+5}+a_{k+4}+4b_1+5+2j+l] \nonumber\\
&\setminus &\{b_1, a_{k+5}+a_{k+4}+3b_1+5+2j+l\}.\end{eqnarray}

If $a_{k+5}>m-a_{k+4}-2j-l$, then $$b_2\geq a_{k+5}+a_{k+4}+3b_1+5+2j+l>3b_1+5+m,$$ which contradicts with (\ref{eq3.7}).

If $a_{k+5}=m-a_{k+4}-2j-l$, then $$P(\{a_1,\ldots,a_{k+5}\})=[0, 4b_1+5+m] \setminus \{b_1, b_2\}.$$
So, $b_3\geq 4b_1+6+m=b_2+b_1+1$.

If $a_{k+5}<m-a_{k+4}-2j-l$, then \begin{equation}\label{eq310}a_{k+5}+a_{k+4}+3b_1+5+2j+l<3b_1+5+m=b_2.\end{equation}
By (\ref{eq3.9}) we have\begin{eqnarray}\label{eq3.11}a_{k+6}&+&P(\{a_1,\ldots,a_{k+5}\})=\left[a_{k+6}, \sum_{j=4}^6a_{k+j}+4b_1+5+2j+l\right]\nonumber\\
&&\Bigg\backslash\left\{a_{k+6}+b_1, \sum_{j=4}^6a_{k+j}+3b_1+5+2j+l\right\}.\end{eqnarray}
If $a_{k+6}>a_{k+5}+a_{k+4}+3b_1+5+2j+l$, then by (\ref{eq3.9}) and (\ref{eq3.11}), we have $$b_2=a_{k+5}+a_{k+4}+3b_1+5+2j+l,$$which contradicts with (\ref{eq310}).
Thus $$a_{k+6}\leq a_{k+5}+a_{k+4}+3b_1+5+2j+l.$$

If $a_{k+6}+b_1=a_{k+5}+a_{k+4}+3b_1+5+2j+l$,  then \begin{eqnarray*}&&P(\{a_1,\ldots,a_{k+6}\})=[0, 2a_{k+5}+2a_{k+4}+6b_1+10+4j+2l] \\
&\setminus& \{b_1, a_{k+5}+a_{k+4}+3b_1+5+2j+l, 2a_{k+5}+2a_{k+4}+5b_1+10+4j+2l\}.\end{eqnarray*}
Thus \begin{eqnarray*}&&a_{k+7}+P(\{a_1,\ldots,a_{k+6}\})=\left[a_{k+7}, a_{k+7}+2\sum_{j=4}^5a_{k+j}+6b_1+10+4j+2l\right]\\
&&\Bigg\backslash \left\{a_{k+7}+b_1, \sum\limits_{\substack{j=4\\j\neq 6}}^7a_{k+j}+3b_1+5+2j+l, a_{k+7}+2\sum_{j=4}^5a_{k+j}+5b_1+10+4j+2l\right\}.\end{eqnarray*}
Similar to the above discussion, we have $$a_{k+7}\leq a_{k+5}+a_{k+4}+3b_1+5+2j+l.$$ Noting that $$a_{k+7}> a_{k+6}=a_{k+5}+a_{k+4}+2b_1+5+2j+l,$$
 $$\sum_{j=4}^5a_{k+j}+3b_1+6+2j+l\leq a_{k+7}+b_1\leq \sum_{j=4}^5a_{k+j}+4b_1+5+2j+l,$$ $$2\sum_{j=4}^5a_{k+j}+5b_1+11+4j+2l\leq a_{k+7}+\sum_{j=4}^5a_{k+j}+3b_1+5+2j+l,$$
$$a_{k+7}+\sum_{j=4}^5a_{k+j}+3b_1+5+2j+l\leq 2\sum_{j=4}^5a_{k+j}+6b_1+10+4j+2l,$$
we have\begin{eqnarray*}P(\{a_1,\ldots,a_{k+7}\})&=&\left[0, a_{k+7}+2\sum_{j=4}^5a_{k+j}+6b_1+10+4j+2l\right] \\&\Big\backslash& \left\{b_1, a_{k+7}+2\sum_{j=4}^5a_{k+j}+5b_1+10+4j+2l\right\}.\end{eqnarray*}
Since $a_{k+4}\geq b_1+4+j+l$, we have $$b_2\geq a_{k+7}+2\sum_{j=4}^5a_{k+j}+5b_1+10+4j+2l>6b_1+10,$$ a contradiction.

If $a_{k+6}+b_1\neq a_{k+5}+a_{k+4}+3b_1+5+2j+l$, then
\begin{eqnarray*}P(\{a_1,\ldots,a_{k+6}\})&=&\left[0, \sum_{j=4}^6a_{k+j}+4b_1+5+2j+l\right]\\
&&\Bigg\backslash \left\{b_1, \sum_{j=4}^6a_{k+j}+3b_1+5+2j+l\right\}.\end{eqnarray*}
Thus $$b_2\geq a_{k+6}+a_{k+5}+a_{k+4}+3b_1+5+2j+l>6b_1+10,$$a contradiction.

{\bf Subcase 2.2:} $a_{k+5}+b_1=a_{k+4}+3b_1+5+2j+l$. By (\ref{eq3.60}) and (\ref{eq3.8}), we have
\begin{eqnarray}\label{eq3.12}&&P(\{a_1,\ldots,a_{k+5}\})=[0, 2a_{k+4}+6b_1+10+4j+2l] \\
&&\setminus \{b_1, a_{k+4}+3b_1+5+2j+l, 2a_{k+4}+5b_1+10+4j+2l\}.\nonumber\end{eqnarray}
Thus \begin{eqnarray}\label{eq3.13}&&a_{k+6}+P(\{a_1,\ldots,a_{k+5}\})=[a_{k+6}, a_{k+6}+2a_{k+4}+6b_1+10+4j+2l]\\
&&\setminus\{a_{k+6}+b_1, a_{k+6}+a_{k+4}+3b_1+5+2j+l, a_{k+6}+2a_{k+4}+5b_1+10+4j+2l\}.\nonumber\end{eqnarray}
If $a_{k+6}>a_{k+4}+3b_1+5+2j+l,$ then
$$b_2=a_{k+4}+3b_1+5+2j+l,$$ which contradicts with (\ref{eq3.7}).
Thus $$a_{k+6}\leq a_{k+4}+3b_1+5+2j+l.$$
Noting that $$a_{k+6}> a_{k+5}=a_{k+4}+2b_1+5+2j+l,$$
$$a_{k+4}+3b_1+6+2j+l\leq a_{k+6}+b_1\leq a_{k+4}+4b_1+5+2j+l,$$
$$2a_{k+4}+5b_1+11+4j+2l \leq a_{k+6}+a_{k+4}+3b_1+5+2j+l\leq 2a_{k+4}+6b_1+10+4j+2l,$$
by (\ref{eq3.12}) and (\ref{eq3.13}) we have \begin{eqnarray*}P(\{a_1,\ldots,a_{k+6}\})&=&[0, a_{k+6}+2a_{k+4}+6b_1+10+4j+2l] \\&&\setminus \{b_1, a_{k+6}+2a_{k+4}+5b_1+10+4j+2l\}.\end{eqnarray*}
Since $a_{k+4}\geq b_1+4+j+l$, we have $$b_2\geq a_{k+6}+2a_{k+4}+5b_1+10+4j+2l>6b_1+10,$$ a contradiction.

This completes the proof of Theorem \ref{thm1}.

\section{Proof of Theorem \ref{thm2}}

Choose $b_1\in\{4,7,8\}\cup \{b: b\geq 11, b\in \mathbb{N}\}$, by the proof of [2, Theorem 1], there exists $A_1=\{a_1<a_2<\cdots<a_k\}\subseteq [1,b_1-1]$ such that
 $P(A_1)=[0,b_1-1].$

Write $b_2=3b_1+5+m$. By the proof of Theorem \ref{thm1}, we have
$$a_{k+2}=b_1+2+j,\quad
a_{k+3}=b_1+3+j+l,$$
where \begin{equation}\label{eq3.140}0\leq j\leq \min\left\{\left\lfloor\frac{m}{2}\right\rfloor, b_1-1\right\}, \quad 0\leq l\leq \min\{m-2j,b_1-1\}.\end{equation}

If $3b_1+5\leq b_2\leq 4b_1+4$, then $0\leq m\leq b_1-1$. By (\ref{eq3.140}) we have
$$0\leq j\leq \left\lfloor\frac{m}{2}\right\rfloor, \quad 0\leq l\leq m-2j.$$
Choose $j=0$ and $l=m$. That is,
 \begin{equation}\label{eq3.14}a_{k+2}=b_1+2, \quad a_{k+3}=b_1+3+m.\end{equation}

If $4b_1+5\leq b_2\leq 4b_1+8$, then $b_1\leq m\leq b_1+3$.
Choose $$j=\left\lceil\frac{m-b_1+1}{2}\right\rceil.$$
Then $m-2j\leq b_1-1$. By (\ref{eq3.140}) we have $0\leq l\leq m-2j$. Let $l=m-2j$. That is,
\begin{equation}\label{eq3.15}a_{k+2}=b_1+2+\left\lceil\frac{m-b_1+1}{2}\right\rceil, \quad a_{k+3}=b_1+3+m-\left\lceil\frac{m-b_1+1}{2}\right\rceil.\end{equation}

 By (\ref{eq3.14}), (\ref{eq3.15}) and Lemma \ref{lem2}(ii), we have \begin{equation}\label{eq317}P(\{a_1,\ldots,a_{k+3}\})=[0, 4b_1+5+m] \setminus \{b_1, 3b_1+5+m\}.\end{equation}
Choose \begin{equation}\label{eq314}a_{i+1}\geq a_i^2\quad (i=k+3,k+4,\ldots).\end{equation}

If $3b_1+5\leq b_2\leq 4b_1+4$, then $$a_{k+4}\geq (b_1+3+m)^2\geq b_1^2+6b_1+9>4b_1+6+m.$$

If $4b_1+5\leq b_2\leq 4b_1+8$, then$$a_{k+4}\geq \left(b_1+3+m-\left\lceil\frac{m-b_1+1}{2}\right\rceil\right)^2> b_1^2+6b_1+9>4b_1+6+m.$$
Hence, if $3b_1+5\leq b_2\leq 4b_1+8$, then we can choose $$a_{k+4}>4b_1+6+m.$$ By (\ref{eq317}), we have $b_3=b_2+b_1+1$.

 Let $A=\{a_1<a_2<\cdots\}$ and $B=\mathbb{N}\setminus P(A)$. By (\ref{eq314}), for $i\geq k+3$ we have
 $$a_{i+1}>a_1+a_2+\cdots+a_i+1.$$
So $a_{i+1}-1\not\in P(A)$ and $a_{i+1}-1\in B$.

If $4b_1+9\leq b_2\leq 6b_1+10$ and $b_2\neq 5b_1+10$, then $b_1+4\leq m\leq 3b_1+5$ and $m\neq 2b_1+5$. Choose $$a_{k+2}=b_1+2, \quad a_{k+3}=b_1+3.$$
By Lemma \ref{lem2}(ii), we have $$P(\{a_1,\ldots,a_{k+3}\})=[0, 4b_1+5] \setminus \{b_1, 3b_1+5\}.$$
Choose $a_{k+4}=m$, we have
\begin{equation}\label{eq4.6}P(\{a_1,\ldots,a_{k+4}\})=[0, 4b_1+5+m] \setminus \{b_1, 3b_1+5+m\}.\end{equation}
Choose \begin{equation}\label{eq30}a_{i+1}\geq a_i^2\quad (i=k+4,\ldots).\end{equation}
By $b_1\geq 4$, we have $$a_{k+5}\geq a_{k+4}^2=m^2\geq b_1^2+8b_1+16>4b_1+6+m,$$ thus by (\ref{eq4.6}) we have $b_3=b_2+b_1+1$.

If $b_2= 5b_1+10$, then $m=2b_1+5$. Choose $j=0,l=1$, that is, $$a_{k+2}=b_1+2, \quad a_{k+3}=b_1+4.$$
By Lemma \ref{lem2}(ii), we have $$P(\{a_1,\ldots,a_{k+3}\})=[0, 4b_1+6] \setminus \{b_1, 3b_1+6\}.$$
Thus
$$a_{k+4}+P(\{a_1,\ldots,a_{k+3}\})=[a_{k+4}, a_{k+4}+4b_1+6] \setminus \{a_{k+4}+b_1, a_{k+4}+3b_1+6\}.$$
Choose $a_{k+4}=2b_1+4=m-1$, we have
\begin{equation}\label{eq4.8}P(\{a_1,\ldots,a_{k+4}\})=[0, 4b_1+5+m] \setminus \{b_1, 3b_1+5+m\}.\end{equation}
Choose \begin{equation}\label{eq33}a_{i+1}\geq a_i^2\quad (i=k+4,\ldots).\end{equation}
By $b_1\geq 4$, we have $$a_{k+5}\geq a_{k+4}^2=(m-1)^2=4b_1^2+16b_1+16>4b_1+6+m,$$ thus $a_{k+5}>4b_1+6+m$. By (\ref{eq4.8}) we have $b_3=b_2+b_1+1$.

 Let $A=\{a_1<a_2<\cdots\}$ and $B=\mathbb{N}\setminus P(A)$. By (\ref{eq30}) and (\ref{eq33}), for $i\geq k+4$ we also have
 $$a_{i+1}>a_1+a_2+\cdots+a_i+1.$$
So $a_{i+1}-1\not\in P(A)$ and $a_{i+1}-1\in B$.

In all cases, we have $b_3=b_2+b_1+1$ and $P(A)=\mathbb{N}\setminus B$.

 This completes the proof of Theorem \ref{thm2}.


\begin{thebibliography}{30}
\bibitem{Burr} S.A. Burr, {\it Combinatorial Theory and its Applications III}. Ed. P. Erd\H{o}s, A. R\'{e}nyi, V.T. S\'{o}s, North-Holland, Amsterdam, 1970.

\bibitem{chen2012} Y.G. Chen and J.H. Fang, {\it On a problem in additive number theory}, Acta Math. Hungar. 134(2012), 416-430.

\bibitem{chen2013} Y.G. Chen and J.D. Wu, {\it The inverse problem on subset sums}, European. J. Combin. 34(2013), 841-845.

\bibitem{Fang2019} J.H. Fang and Z.K. Fang, {\it On an inverse problem in additive number theory}, Acta Math. Hungar. 158(2019), 36-39.

\bibitem{Hegy} N. Hegyv\'{a}ri, {\it On representation problems in the additive number theory}, Acta Math. Hungar. 72(1996), 35-44.

\bibitem{Tang} M. Tang and H.W. Xu, {\it On a problem in additive number theory}, preprint.

\end{thebibliography}
\end{document}